  \newtheorem{thm}{Theorem}[section]
 \newtheorem{prop}[thm]{Proposition}
 \newtheorem{rem}[thm]{Remark}
 \DeclareMathOperator{\supp}{supp}
\newcommand{\C}{\mathbb{C}}
\newcommand{\R}{\mathbb{R}}
\def\C{\mathbb{C}}
\def\R{\mathbb{R}}
\def\D{\mathbb{D}}
\author {Christian Berg}
\title {Indeterminate Stieltjes moment problems revisited}
\date{11.7.2024}
\begin{document}
\maketitle

\begin{center}
{\bf Dedicated to Mourad Ismail at the occasion of his 80'th birthday}  
\end{center}

\medskip
\begin{abstract} We consider a normalized indeterminate Hamburger moment sequence $s$ which is supposed to be Stieltjes. We revisit old results about determinacy/indeterminacy in the sense of Stieltjes for $s$ and we prove some new results about the concepts involved.
\end{abstract}

{\bf Mathematics Subject Classification}: Primary 44A60; Secondary 30B50

{\bf Keywords}. Stieltjes and Hamburger moment problems. Indeterminacy. Friedrichs solution to a Stieltjes problem.

\section{Introduction}

In a recent paper  Sokal and Walrad \cite{S:W} gave a continued fraction characterization of Stieltjes moment sequences for which there exists a representing measure with support in $[\xi,\infty)$. In Remark 3 of their paper the authors referred to a private communication by the present author. The purpose of the present paper is to make this communication available.  

For a normalized Stieltjes moment sequence $s$ which is indeterminate as a Hamburger moment sequence, there is a  quantity $\alpha=\alpha(s)\le 0$, which  
can be used to characterize Stieltjes indeterminacy of $s$. In fact there is exactly one measure (resp. several measures) supported by $[0,\infty)$  with moment sequence $s$ if  $\alpha(s)=0$ (resp. $\alpha(s)<0$). Chihara \cite[Lemma 1]{Ch1} proved an equivalent of
\begin{equation}\label{eq:alpha1}
\alpha(s)=\lim_{n\to\infty}\frac{P_n(0)}{Q_n(0)}
\end{equation}
in terms of chain sequences,
where $P_n$ and $Q_n$ are the orthonormal polynomials and those of the second kind affiliated with $s$. In \cite{B:V} there was given another characterization of $\alpha(s)$ as
\begin{equation}\label{eq:alpha2}
\alpha(s)=\lim_{x\to -\infty}\frac{D(x)}{B(x)},
\end{equation}
where $B,D$ are two of the Nevanlinna functions for the indeterminate Hamburger problem,
see Section 2  for definitions in connection with the Nevanlinna parametrization \eqref{eq:Nev-par} of the solutions to the indeterminate Hamburger moment problem. The formula \eqref{eq:alpha2} for $\alpha(s)$ together with the graph of the function $j(x)=D(x)/B(x)$ in Section 3 makes it transparent that the zeros of $B(z)t-D(z)$ are bounded below, hence of the form $x_1(t)<x_2(t)<\ldots$ for a sequence $(x_n(t))$ tending to infinity. When $t=\infty$ then $B(z)t-D(z)$ shall be interpreted as $B(z)$. Therefore, the
 N-extremal solutions $\nu_t, t\in\R\cup\{\infty\}$ of the Hamburger moment problem are of the form
\begin{equation}\label{eq:Ne1}
\nu_t=\sum_{n=1}^\infty \rho(x_n(t))\varepsilon_{x_n(t)},
\end{equation}
where 
\begin{equation}\label{eq:Ne2}
\rho(x)=\left(\sum_{n=0}^\infty P_n^2(x)\right)^{-1},\quad x\in\R.
\end{equation}
By $\varepsilon_a$ we denote the measure with mass 1 concentrated in the point $a\in\R$.

For any solution $\mu$ of the Hamburger moment problem we define
\begin{equation}\label{eq:Ne3}
\xi(\mu):=\inf \supp(\mu).
\end{equation}
We therefore get
$$
\supp(\nu_t)=\{x_n(t), n\ge 1\},\quad \xi(\nu_t)=x_1(t).
$$
In particular, the graph of $j(x)$  shows that $\supp(\nu_t)$ is contained in $[0,\infty)$ if and only if $\alpha(s)\le t\le 0$. The last  result can also be deduced from \cite[p. 340]{Ch2}. Notice that there are solutions $\mu$ with $\supp(\mu)=\R$ and hence $\xi(\mu)=-\infty$. This is true for the solution 
$$
\mu=\frac{1}{\pi}\left(B^2(x)+D^2(x)\right)^{-1}dx,
$$
cf. formula (2.15) in \cite{B:V}.

The standard orthonormal basis of $\ell^2$ is denoted $e_0, e_1,\ldots$. 
We consider the Jacobi matrix \eqref{eq:Jac} as an operator in $\ell^2$ with the domain
$\mathcal F$ of sequences with only finitely many non-zero elements. 
It is a symmetric operator of deficiency indices $(1,1)$. Its self-adjoint extensions in $\ell^2$ can be parametrized as $(T_t, D(T_t)), t\in\R\cup\{\infty\}$ such that
\begin{equation}\label{eq:Ne4}
\nu_t=\langle E_t e_0, e_0\rangle, t\in\R\cup\{\infty\},
\end{equation}
where $E_t$ is the spectral measure of $T_t$. The operators $T_t$ are bounded below with the lower bound
\begin{equation}\label{eq:Ne5}
\xi(\nu_t)=x_1(t)=\inf\{\langle T_t e,e\rangle \mid e\in D(T_t), ||e||=1\}.
\end{equation}
 
Later the two N-extremal solutions
 $\nu_0$ and $\nu_{\alpha(s)}$ were identified with respectively the Krein and the Friedrichs self-adjoint extensions of the Jacobi matrix. As far as we know the measure $\nu_{\alpha(s)}$ was first associated with the  Friedrichs extension
in Pedersen's paper \cite{Pe1}.

For information  about the Friedrichs extension in general see \cite{GKP}. It has the same lower bound as the Jacobi matrix, viz.
$$
x_1(\alpha(s))=\inf\{\langle Je, e\rangle \mid e\in\mathcal F, ||e||=1\}.
$$
 
 \begin{rem}{\rm The self-adjoint operators $(T_t, D(T_t))$ are parametrized differently in \cite[Theorem 6.23]{Sch} because of a different form of the Nevanlinna parametrization. See Remark 7.14 in \cite{Sch}.
 }
 \end{rem}
 
 We prove that $\alpha(s)$ can also be calculated as
 $$
 \alpha(s)=\lim_{x\to -\infty}\frac{C(x)}{A(x)},
 $$
 where $A,C$ are the two other Nevanlinna functions.
 
 Let $x_{n,k}, k=1,\dots,n$ be the zeros of $P_n$ in increasing order. By the interlacing of the zeros of $P_n$ and $P_{n+1}$ we have that $(x_{n,k})_{n\ge k}$ is decreasing in $n$ for each $k\ge 1$, and therefore the following limit exists for each $k\ge 1$:
\begin{equation}\label{eq:xik}
\xi_k:=\lim_{n\to\infty}x_{n,k},
\end{equation} 
and clearly $0\le \xi_1\le \xi_2\le\ldots$. These numbers appear on the graph in Section 3.

In \cite[Lemma 2]{Ch1} Chihara proved  the existence of an N-extremal measure
 $\psi_*$, which  is concentrated on the sequence $(\xi_k)$. For the benefit of the reader we give a complete proof in  Theorem~\ref{thm:Fr1}. Chihara also proved that $\psi_*$ has the extremality property, that any other solution $\mu\neq \psi_*$ of the indeterminate Hamburger moment problem satisfies
\begin{equation}\label{eq:Ne6}
\xi(\mu)<\xi(\psi_*)=\xi_1.
\end{equation} 
Chihara does not mention the Friedrichs extension, but \eqref{eq:Ne6} is essential in Pedersen's proof that $\psi_*$ is the Friedrichs extension, because
it has the greatest lower bound among the self-adjoint extensions of the Jacobi matrix.

Chihara's proofs are largely based on chain sequences. The result is given in  Theorem~\ref{thm:Fr2} with a proof independent of chain sequences.
The measure $\psi_*$ equals $\nu_{\alpha(s)}$, so $\xi_k=x_k(\alpha(s)), k=1,2,\ldots$ with the notation of \eqref{eq:Ne1}.

In Section 4 we consider the moment problem corresponding to the shifted Jacobi matrix
$J^\dagger$ formed by the shifted Jacobi parameters $(a_{k+1}), (b_{k+1})$. This means that $J^\dagger$ is obtained from $J$ by deleting the first row and column of $J$. The corresponding normalized Stieltjes moment sequence $s^\dagger$ is  always indeterminate as Stieltjes moment sequence because of \eqref{eq:dgalpha}. This has also been obtained in \cite[Theorem 4.3]{B0} by another  proof.  In Theorem~\ref{thm:final} we finally prove the interlacing of the sequences $(\xi_k)$ and $(\xi^\dagger_k)$ associated with $s$ and $s^\dagger$ via \eqref{eq:xik}.

\section{Preliminaries about indeterminate Hamburger moment problems}

The starting point is a normalized  indeterminate Hamburger moment sequence $s=(s_n)_{n\ge 0}$ as in Section 2.1 of \cite{B:V}, i.e., 
\begin{equation}\label{eq:mom}
s_n=\int x^n\,d\mu(x),\quad \mu\in V, n=0,1,\ldots, 
\end{equation}
where $V$ is the infinite convex set of probability measures on $\R$ with moments of any order and having the moments $s_n$.  As in \cite{B:V} we denote by $P_n(x)$ and $Q_n(x)$ the orthonormal polynomials and the corresponding polynomials of the second kind.
Both polynomial sequences satisfy the difference equation 
\begin{equation}\label{eq:3term}
xy_n=b_ny_{n+1}+a_ny_n+b_{n-1}y_{n-1},\quad n\ge 1
\end{equation}
together with the initial conditions
\begin{equation}\label{eq:initial}
P_0(x)=1, P_1(x)=\frac{1}{b_0}(x-a_0),\quad Q_0(x)=0, Q_1(x)=\frac{1}{b_0}.
\end{equation}
Here
\begin{equation}\label{eq:ab}
a_n=\int xP_n^2(x)\,d\mu(x),\quad b_n=\int xP_n(x)P_{n+1}(x)\,d\mu(x),\quad \mu\in V.
\end{equation}
The infinite matrix
\begin{equation}\label{eq:Jac}
J=\begin{pmatrix}
a_0 & b_0 & 0 & \hdots\\
b_0 & a_1 & b_1 & \hdots\\
0 & b_1 & a_2 & \hdots\\
\vdots &\vdots & \vdots & \ddots
\end{pmatrix},
\end{equation}
is called the Jacobi matrix of the moment problem. The matrix $J$ defines an operator in
$\ell^2$  with the  domain $\mathcal F$ of sequences with only finitely many non-zero elements. The closure of $J$ is a symmetric operator $(T,D(T))$ in $\ell^2$ with deficiency indices $(1,1)$ called the Jacobi operator of $s$. For results about the domain $D(T)$ of the Jacobi operator, and the domains $D(T_t)$ of its self-adjoint extensions see \cite{B:S}. 

 We combine the polynomials $P_n,Q_n$ to form polynomials of two variables, and 
the following formulas hold:
\begin{prop}\cite[Proposition 5.24]{Sch}\label{thm:An-Dn} For $u,v\in\C$ and $n\ge 0$ we have
\begin{eqnarray*}
A_n(u,v)&:=&(u-v)\sum_{k=0}^nQ_k(u)Q_k(v)=
b_n\left|\begin{array}{cc}
 Q_{n+1}(u)&\;Q_{n+1}(v)\\Q_{n}(u)&\;Q_{n}(v)\end{array}\right|\\
B_n(u,v)&:=&-1+(u-v)\sum_{k=0}^n P_k(u)Q_k(v)=
b_n\left|\begin{array}{cc}
P_{n+1}(u)&\;Q_{n+1}(v)\\P_{n}(u)&\;Q_{n}(v)\end{array}\right|\\
C_n(u,v)&:=&1+(u-v)\sum_{k=0}^n Q_k(u)P_k(v)=
b_n\left|\begin{array}{cc}
Q_{n+1}(u)&\;P_{n+1}(v)\\
Q_{n}(u)&\;P_{n}(v)\end{array}\right|\\
D_n(u,v)&:=&(u-v)\sum_{k=0}^nP_k(u)P_k(v)=
b_n\left|\begin{array}{cc}
P_{n+1}(u)&\;P_{n+1}(v)\\
P_{n}(u)&\;P_{n}(v)\end{array}\right|.
\end{eqnarray*}
\end{prop}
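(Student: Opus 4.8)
The plan is to treat all four identities at once, since each is an instance of a single discrete Christoffel--Darboux computation. Let $y_k,z_k\in\{P_k,Q_k\}$ be any two of the polynomial solutions of the three-term recurrence \eqref{eq:3term}, where $y$ is the type occupying the $u$-column and $z$ the type occupying the $v$-column of the relevant determinant. Introduce the Casoratian (discrete Wronskian)
\[
W_n:=b_n\bigl(y_{n+1}(u)z_n(v)-y_n(u)z_{n+1}(v)\bigr),
\]
which is precisely the $b_n$-times-determinant on the right-hand sides. The goal is then to show that the telescoping sum on each left-hand side equals $W_n$ once the additive constants $0,-1,1$ and the $k=0$ term are accounted for.

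The key step is a one-line consequence of the recurrence. Writing \eqref{eq:3term} for $y_k(u)$ and multiplying by $z_k(v)$, then writing it for $z_k(v)$ and multiplying by $y_k(u)$, and subtracting, the $a_k$-terms cancel and the $b_k$- and $b_{k-1}$-terms reassemble into $W_k$ and $-W_{k-1}$ respectively, giving for every $k\ge 1$
\[
(u-v)\,y_k(u)z_k(v)=W_k-W_{k-1}.
\]
Summing from $k=1$ to $n$ telescopes to
\[
(u-v)\sum_{k=1}^n y_k(u)z_k(v)=W_n-W_0.
\]

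It remains to specialize and to evaluate $W_0$ from the initial conditions \eqref{eq:initial}. Using $P_0=1$, $P_1(x)=(x-a_0)/b_0$, $Q_0=0$, $Q_1=1/b_0$, a direct computation yields $W_0=u-v$ for the pair $(P,P)$, $W_0=0$ for $(Q,Q)$, $W_0=-1$ for $(P,Q)$, and $W_0=1$ for $(Q,P)$. The $k=0$ summand of the full sum $\sum_{k=0}^n$ vanishes whenever a factor $Q_0$ appears (hence for $A_n$, $B_n$, $C_n$) and contributes $(u-v)$ for $D_n$; combining these with the values of $W_0$ and the constants $-1$ in $B_n$ and $1$ in $C_n$ recovers each stated identity, e.g.\ $D_n=(u-v)+\bigl(W_n-(u-v)\bigr)=W_n$ and $B_n=-1+0+\bigl(W_n-(-1)\bigr)=W_n$.

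I expect no serious obstacle: the entire content lies in the cancellation that produces the telescoping identity, which is forced by \eqref{eq:3term}. The only point requiring care is the boundary bookkeeping --- the value of $W_0$ and the vanishing or non-vanishing of the $k=0$ term --- which is exactly what pins down the constants $0,\pm 1$ distinguishing $A_n,B_n,C_n,D_n$. One could equally run an induction on $n$, the inductive step being the relation $(u-v)y_{n+1}(u)z_{n+1}(v)=W_{n+1}-W_n$; I prefer the telescoping formulation since it dispatches all four cases simultaneously.
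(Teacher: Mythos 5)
Your proof is correct: the telescoping identity $(u-v)\,y_k(u)z_k(v)=W_k-W_{k-1}$ and the boundary values $W_0=u-v,\,0,\,-1,\,1$ for the pairs $(P,P),(Q,Q),(P,Q),(Q,P)$ all check out, and they account exactly for the constants $0,\pm1$ and the $k=0$ terms. The paper gives no proof of its own but cites \cite[Proposition 5.24]{Sch}, whose argument is precisely this Christoffel--Darboux/Casoratian computation, so you have reproduced the standard proof.
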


It is important to notice that
\begin{equation*}\label{eq:det4}
\left|\begin{array}{cc}\ A_n(u,v)&\;B_n(u,v)\\
 C_n(u,v)&\; D_n(u,v)\end{array}\right|=1\textrm{ for }(u,v)\in\mathbb
C^2,
\end{equation*}
cf. \cite[Equation(5.57)]{Sch}. These polynomials were also introduced in \cite[p. 123]{Ak}.

Since we assume $(s_n)$ to be indeterminate , we know that $(P_n(u)), (Q_n(u))\in\ell^2$ for all $u\in\C$, so we can let $n$ tend to infinity in the expressions in the middle of Proposition~\ref{thm:An-Dn}, and we get four entire functions of two complex variables called the Nevanlinna functions of the indeterminate Hamburger moment problem:

\begin{eqnarray}
A(u,v)&=&(u-v)\sum_{k=0}^\infty Q_k(u)Q_k(v)\label{eq:A}\\
B(u,v)&=&-1+(u-v)\sum_{k=0}^\infty P_k(u)Q_k(v) \label{eq:B}\\
C(u,v)&=&1+(u-v)\sum_{k=0}^\infty Q_k(u)P_k(v)\label{eq:C}\\
D(u,v)&=&(u-v)\sum_{k=0}^\infty P_k(u)P_k(v)\label{eq:D},
\end{eqnarray}
see Section 7.1 in \cite{Sch}. The two-variable  functions were introduced in \cite{Bu:Ca} in a slightly different form, which was subsequently  used in \cite{B}, \cite{Pe}.
If the functions of \cite{Bu:Ca} are marked with a $*$, we have
\begin{eqnarray*}\label{eq:A*-D*}
A^*(u,v)&=&-A(u,v),\; B^*(u,v)=-C(u,v),\\
C^*(u,v)&=&-B(u,v),\; D^*(u,v)=-D(u,v).
\end{eqnarray*}
We clearly have the determinant equation
\begin{equation*}\label{eq:det5}
\left|\begin{array}{cc}\ A(u,v)&\;B(u,v)\\
 C(u,v)&\; D(u,v)\end{array}\right|=1\textrm{ for }(u,v)\in\mathbb
C^2.
\end{equation*}
We  define the following polynomials and entire functions of one variable
\begin{equation}\label{eq:pol-one}
A_n(u)=A_n(u,0),\;B_n(u)=B_n(u,0),\;C_n(u)=C_n(u,0),\;D_n(u)=D_n(u,0),
\end{equation}
\begin{equation}\label{eq:Nev-one}
A(u)=A(u,0),\;B(u)=B(u,0),\;C(u)=C(u,0),\;D(u)=D(u,0).
\end{equation}
The latter are also called the Nevanlinna functions of the moment problem, and they enter in the Nevanlinna parametrization of the indeterminate Hamburger moment problem, $V=\{\nu_\varphi\mid \varphi\in\mathcal N\cup\{\infty\}\}$:
\begin{equation}\label{eq:Nev-par}
\int\frac{d\nu_\varphi(x)}{x-z}=-\frac{A(z)\varphi(z)-C(z)}{B(z)\varphi(z)-D(z)},\quad z\in\C\setminus\R,
\end{equation}
where $\mathcal N$ is the set of Pick functions, i.e., the  holomorphic functions $\varphi:\C\setminus\R\to\C$ satisfying $\Im(\varphi(z))/\Im(z)\ge 0$.  See \cite{Ak} or \cite{B:V} for details. When the Pick function $\varphi\in\mathcal N$ is a constant $t\in\R\cup\{\infty\}$, the solution $\nu_t$ is called N-extremal or a von Neumann solution.
It is well-known that the entire functions $A, B, C, D$ have infinitely many zeros  which are all real and simple.

The two-variable polynomials of Proposition~\ref{thm:An-Dn} can be expressed in terms of the one variable polynomials \eqref{eq:pol-one}   as follows, see \cite[p. 123]{Ak}, or \cite{B:S}:
\begin{prop}\label{thm:2to1} For $u, v\in\C$ we have
 \begin{eqnarray*}
A_n(u,v)&=& \left|\begin{array}{cc}
 A_n(u)&\;A_n(v)\\C_n(u)&\;C_n(v)\end{array}\right|\\
 B_n(u,v)&=& \left|\begin{array}{cc}
 B_n(u)&\;A_n(v)\\D_n(u)&\;C_n(v)\end{array}\right|\\
  C_n(u,v)&=& \left|\begin{array}{cc}
 A_n(u)&\;B_n(v)\\C_n(u)&\;D_n(v)\end{array}\right|\\
  D_n(u,v)&=& \left|\begin{array}{cc}
 B_n(u)&\;B_n(v)\\D_n(u)&\;D_n(v)\end{array}\right|.
 \end{eqnarray*}
\end{prop}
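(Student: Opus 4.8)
The plan is to obtain all four identities simultaneously from the closed determinant formulas on the right of Proposition~\ref{thm:An-Dn}, reducing the statement to a single bilinear identity in the plane. I would introduce the two ``solution vectors'' $\mathbf q(x)=(Q_{n+1}(x),Q_n(x))^{\top}$ and $\mathbf p(x)=(P_{n+1}(x),P_n(x))^{\top}$ in $\C^2$, and for $a,b\in\C^2$ write $[a,b]$ for the determinant of the matrix with columns $a,b$. Proposition~\ref{thm:An-Dn} then reads $A_n(u,v)=b_n[\mathbf q(u),\mathbf q(v)]$, $B_n(u,v)=b_n[\mathbf p(u),\mathbf q(v)]$, $C_n(u,v)=b_n[\mathbf q(u),\mathbf p(v)]$ and $D_n(u,v)=b_n[\mathbf p(u),\mathbf p(v)]$; the one-variable functions \eqref{eq:pol-one} are the same four expressions with the second argument frozen at $0$, e.g. $A_n(u)=b_n[\mathbf q(u),\mathbf q(0)]$, $C_n(u)=b_n[\mathbf q(u),\mathbf p(0)]$, and so on.

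With this dictionary, substituting into the right-hand side of any of the four asserted formulas produces an expression of the uniform shape
\begin{equation*}
b_n^2\bigl([\mathbf a(u),\mathbf b(0)]\,[\mathbf c(v),\mathbf d(0)]-[\mathbf a(u),\mathbf d(0)]\,[\mathbf c(v),\mathbf b(0)]\bigr),
\end{equation*}
where the choice $(\mathbf a,\mathbf c)\in\{\mathbf q,\mathbf p\}^2$ selects the case ($(\mathbf q,\mathbf q),(\mathbf p,\mathbf q),(\mathbf q,\mathbf p),(\mathbf p,\mathbf p)$ giving $A_n,B_n,C_n,D_n$ respectively) and $(\mathbf b,\mathbf d)=(\mathbf q,\mathbf p)$ throughout. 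Here I would invoke the Grassmann--Plücker three-term relation $[x,y][z,w]-[x,z][y,w]+[x,w][y,z]=0$, valid for any four vectors in $\C^2$; applied with $x=\mathbf a(u)$, $z=\mathbf c(v)$, $y=\mathbf b(0)$, $w=\mathbf d(0)$ it collapses the parenthesis to $[\mathbf a(u),\mathbf c(v)]\,[\mathbf b(0),\mathbf d(0)]$. This is the one computation that makes the four cases genuinely identical, and it is where I would be most careful about the orientation of the brackets so that $[\mathbf q(0),\mathbf p(0)]$, and not its negative, emerges in every case.

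It then remains to evaluate the constant $[\mathbf q(0),\mathbf p(0)]$. Specializing $u=v$ in the determinant formula for $B_n$ in Proposition~\ref{thm:An-Dn} and comparing with the value $B_n(u,u)=-1$ read off from the defining series gives the constant-Wronskian relation $b_n\bigl(P_n(x)Q_{n+1}(x)-P_{n+1}(x)Q_n(x)\bigr)=1$ for all $x$; at $x=0$ this says exactly $[\mathbf q(0),\mathbf p(0)]=1/b_n$. Combining, the uniform shape above equals $b_n^2\cdot[\mathbf a(u),\mathbf c(v)]\cdot(1/b_n)=b_n[\mathbf a(u),\mathbf c(v)]$, which is precisely the determinant expression for the corresponding two-variable function, completing all four identities at once. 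I expect no genuine obstacle beyond bookkeeping: the substance of the argument is the single Plücker identity together with the Wronskian normalization (itself the generalized Christoffel--Darboux constant), and the only place an error could hide is the sign/order step, namely checking that each of the four cases really presents the bracket $[\mathbf b(0),\mathbf d(0)]$ in the order that yields $+1/b_n$.
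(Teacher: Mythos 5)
Your proof is correct. Note that the paper itself offers no argument for this proposition; it simply cites Akhiezer and Berg--Szwarc, so there is no in-text proof to compare against. Your derivation is a clean, self-contained verification: the dictionary $A_n(u,v)=b_n[\mathbf q(u),\mathbf q(v)]$, $B_n(u,v)=b_n[\mathbf p(u),\mathbf q(v)]$, $C_n(u,v)=b_n[\mathbf q(u),\mathbf p(v)]$, $D_n(u,v)=b_n[\mathbf p(u),\mathbf p(v)]$ is exactly what Proposition~\ref{thm:An-Dn} says, the Grassmann--Pl\"ucker relation in $\C^2$ does collapse all four right-hand sides to $b_n^2[\mathbf a(u),\mathbf c(v)][\mathbf q(0),\mathbf p(0)]$ with the bracket in the orientation you claim (I checked each of the four sign patterns), and the normalization $[\mathbf q(0),\mathbf p(0)]=Q_{n+1}(0)P_n(0)-Q_n(0)P_{n+1}(0)=1/b_n$ follows as you say from $B_n(u,u)=-1$ (equivalently, from the constancy of the Casorati determinant of the two solutions $P,Q$ of the recurrence \eqref{eq:3term}, whose value $-1$ is fixed by the initial conditions \eqref{eq:initial}). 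What your route buys is that all four identities are literally one computation, and it makes transparent that the only input beyond Proposition~\ref{thm:An-Dn} is the Wronskian normalization; the classical treatments tend instead to verify the formulas via the transfer-matrix/cocycle structure of the matrix $\bigl(\begin{smallmatrix}A_n&B_n\\C_n&D_n\end{smallmatrix}\bigr)$, which is essentially the same algebra organized as a matrix product. No gap.
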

There are similar expressions for the Nevanlinna functions of one and two variables just by letting $n$ tend to infinity.

Differentiating the formula for $A(u,v)$ with respect to $u$  and setting $u=v=x$ we get
\begin{equation}\label{eq:interA}
A'(x)C(x)-C'(x)A(x)=\sum_{k=0}^\infty Q_k^2(x),
\end{equation}
and the same procedure for $D(u,v)$ leads to
\begin{equation}\label{eq:interD}
B'(x)D(x)-D'(x)B(x)=\sum_{k=0}^\infty P_k^2(x).
\end{equation}
For  zeros $\alpha$ of $A$ and $\gamma$ of $C$ we  get from \eqref{eq:interA}
$$
A'(\alpha)C(\alpha)>0,\quad C'(\gamma)A(\gamma)<0,
$$
which shows that $C$ has a zero between any two consecutive zeros of $A$, and that $A$ has a  zero between any two consecutive  zeros of $C$. In other words the zeros  of $A$ and $C$ are interlacing. In the same way \eqref{eq:interD} shows that the zeros of $B$ and $D$ 
are interlacing.

\section{The Stieltjes case}
From now on we assume that $s=(s_n)$ in addition to the previous assumptions is  a Stieltjes moment sequence, so there exists at least one measure $\mu\in V$ supported by the half-line $[0,\infty)$.

There are two possibilities for a Stieltjes moment problem which is indeterminate as Hamburger moment problem. There can be precisely one solution $\mu\in V$ supported by $[0,\infty)$, or there can be more than one such solution and then infinitely many, since they form a convex set. We say that the Stieltjes moment problem is determinate, respectively indeterminate in the sense of Stieltjes in the two cases. In short
the problem is called det(S) or indet(S).

In \cite[Section 4]{B:V} there is a detailed study of the Al-Salam--Carlitz polynomials $V_n^{(a)}(x;q)$, which depends on parameters $a>0$ and $0<q<1$. There is given a complete classification of when the corresponding Hamburger moment problem is determinate or indeterminate, and when the indeterminate Hamburger problem is det(S) or indet(S).  

Since the zeros of $P_n$ and $Q_n$ are located in $(0,\infty)$ and the leading coefficients of $P_n$ and $Q_n$ are positive, we see that for $k\ge 1, x\le 0$
\begin{equation}\label{eq:PPQQ}
P_k(x)P_k(0)>0,\;Q_k(x)Q_k(0)>0,\;P_k(x)Q_k(0)<0,\;Q_k(x)P_k(0)<0,
\end{equation}  
while for $k=0$ the first quantity equals 1 and the three others equal 0.
It follows that $B_n(x)<B_{n+1}(x)<B(x), D_n(x)>D_{n+1}(x)>D(x)$ for $x<0$, and in particular for $x\le 0$:
\begin{equation}\label{eq:zeroBD}
B(x)\ge B_1(x)=-1+xP_1(x)Q_1(0)=\frac{1}{b_0^2}x(x-a_0)-1,\quad D(x)\le D_1(x)=x.
\end{equation}
 From this equation we see that $B$ has a  smallest  zero $\beta_1<0$ and that $D$ has a smallest zero $\delta_1=0$. This means that we can write the zeros 
 $\beta_n,\delta_n, n\ge 1$ of $B$ and $D$ in increasing order, and by the interlacing property mentioned at the end of Section 2 we have
$$
\beta_1<\delta_1=0<\beta_2<\delta_2<\beta_3<\ldots,
$$
and $\beta_n,\delta_n$ tend to infinity.

From \eqref{eq:PPQQ} we similarly get $A_n(x)>A_{n+1}(x)>A(x), C_n(x)<C_{n+1}(x)<C(x)$ for $x<0$, and in particular for $x\le 0$:
\begin{equation}\label{eq:zeroAC}
A(x)\le A_1(x)=\frac{x}{b_0^2},\quad C(x)\ge C_1(x)=1-\frac{a_0}{b_0^2}x\ge 1.
\end{equation}
From this equation we see that $A$ has a  smallest  zero $\alpha_1=0$ and that $C$ has a smallest zero $\gamma_1>0$. This means that we can write the zeros 
 $\alpha_n,\gamma_n, n\ge 1$ of $A$ and $C$ in increasing order, and by the interlacing property we have
$$
\alpha_1=0<\gamma_1<\alpha_2<\gamma_2<\alpha_3<\ldots,
$$ 
 and $\alpha_n,\gamma_n$ tend to infinity.
 
Based on previous work of Chihara, see \cite{Ch1},\cite{Ch2}, the following result was established in
\cite{B0} and \cite[section 2.2]{B:V}:

\begin{prop}\label{thm:BV} Let $s=(s_n)$ be a normalized  indeterminate Hamburger moment sequence which is Stieltjes. The function $j(x)=D(x)/B(x)$ is strictly decreasing in each of the intervals $(-\infty,\beta_1), (\beta_1,\beta_2), (\beta_2,\beta_3),\ldots$ with limits
$$
j(\beta_k-)=-\infty,\quad j(\beta_k+)=\infty,\quad k=1,2,\ldots.
$$
Furthermore, the quantity $\alpha=\alpha(s)$ given by
\begin{equation}\label{eq:alpha}
\alpha=\lim_{x\to -\infty}\frac{D(x)}{B(x)}=\lim_{n\to\infty}\frac{P_n(0)}{Q_n(0)}
\end{equation}
belongs to $(-\infty,0]$, and the sequence $P_n(0)/Q_n(0)$ is strictly increasing.

The Stieltjes problem is det(S) if $\alpha(s)=0$ and indet(S) if $\alpha(s)<0$. 
\end{prop}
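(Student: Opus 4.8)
I would reduce the first three assertions to the Christoffel--Darboux type identity \eqref{eq:interD} and the sign relations \eqref{eq:PPQQ}, and obtain the Stieltjes dichotomy from the resulting graph of $j$. For the monotonicity and poles, I would differentiate the quotient $j=D/B$ and insert \eqref{eq:interD}: on any interval free of zeros of $B$,
$$
j'(x)=\frac{D'(x)B(x)-D(x)B'(x)}{B(x)^2}=-\frac{\sum_{k=0}^\infty P_k^2(x)}{B(x)^2}<0,
$$
the numerator being strictly positive since $P_0\equiv1$. Hence $j$ is strictly decreasing on each of $(-\infty,\beta_1),(\beta_1,\beta_2),\dots$. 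The zeros of $B$ are simple and interlace those of $D$ (end of Section~2), so each $\beta_k$ is a simple pole of $j$ with $D(\beta_k)\ne0$; a strictly decreasing function with a pole at the right (resp.\ left) endpoint of an interval tends to $-\infty$ (resp.\ $+\infty$), giving $j(\beta_k-)=-\infty$ and $j(\beta_k+)=+\infty$.

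\emph{The two descriptions of $\alpha$.} By \eqref{eq:zeroBD} we have $D(x)\le x<0$ and $B(x)\ge B_1(x)\to+\infty$ as $x\to-\infty$; since $B$ has no zero below $\beta_1$, it follows that $D<0<B$ on $(-\infty,\beta_1)$, so $j<0$ there. Being decreasing and bounded above by $0$, $j$ has a finite limit $\alpha:=\lim_{x\to-\infty}j(x)\in(-\infty,0]$. For the sequence I would use that the Casoratian $b_n\bigl(P_{n+1}(x)Q_n(x)-P_n(x)Q_{n+1}(x)\bigr)$ is constant in $n$, equal to its value $-1$ at $n=0$ (computed from \eqref{eq:initial}); evaluating at $x=0$ gives $P_{n+1}(0)Q_n(0)-P_n(0)Q_{n+1}(0)=-1/b_n$, whence
$$
\frac{P_{n+1}(0)}{Q_{n+1}(0)}-\frac{P_n(0)}{Q_n(0)}=\frac{-1/b_n}{Q_n(0)Q_{n+1}(0)}>0,
$$
because the zeros of $Q_n$ lie in $(0,\infty)$, so consecutive values $Q_n(0),Q_{n+1}(0)$ have opposite signs. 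The same sign count gives $P_n(0)/Q_n(0)<0$, so this strictly increasing sequence has a limit $\alpha_\infty\in(-\infty,0]$.

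\emph{Interchange of limits (the crux).} It remains to prove $\alpha=\alpha_\infty$. With $j_n:=D_n/B_n$, a degree count in the expressions of Proposition~\ref{thm:An-Dn} (with $v=0$) gives $\lim_{x\to-\infty}j_n(x)=P_n(0)/Q_n(0)$ for fixed $n$, while $j_n(x)\to j(x)$ pointwise on $(-\infty,\beta_1)$ since $B$ is zero-free there. Thus $\alpha$ and $\alpha_\infty$ are the two iterated limits of the array $j_n(x)$, and the real task is to exchange $n\to\infty$ with $x\to-\infty$. I would do this by a double-monotonicity (Moore--Osgood) argument: combining the determinant formulas of Proposition~\ref{thm:An-Dn} with the recurrence \eqref{eq:3term} yields the identity
$$
D_n(x)B_{n+1}(x)-D_{n+1}(x)B_n(x)=x\,P_{n+1}^2(x),
$$
so that for $x$ sufficiently negative --- where, by \eqref{eq:zeroBD} and $B_1\le B_n$, every $B_n(x)$ is positive --- the numbers $j_n(x)$ increase strictly in $n$; a companion Wronskian computation, together with the interlacing of the zeros of $P_n,P_{n+1}$, shows each $j_n$ is decreasing in $x$. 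Since $j_n\uparrow j$ and every function is monotone in $x$, the two iterated limits coincide and $\alpha=\alpha_\infty$. I expect this interchange --- extracting the displayed identity and controlling the signs of the $B_n$ uniformly in $n$ for very negative $x$ --- to be the main obstacle; the rest is bookkeeping with \eqref{eq:PPQQ}.

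\emph{The Stieltjes dichotomy.} Finally I would read the support of $\nu_t$ off the graph of $j$: it consists of the solutions of $j(x)=t$. Since $j(0)=D(0)/B(0)=0$, $j$ stays below $\alpha$ on $(-\infty,\beta_1)$, and $j$ decreases from $+\infty$ to $-\infty$ on $(\beta_1,\beta_2)$, the least solution $x_1(t)=\xi(\nu_t)$ is $\ge0$ precisely when $\alpha\le t\le0$. Hence, if $\alpha<0$, the measures $\nu_t$ with $t\in[\alpha,0]$ are infinitely many distinct solutions supported in $[0,\infty)$ and the problem is indet(S). If $\alpha=0$, then $\nu_0$ is the only N-extremal solution in $[0,\infty)$; invoking the extremality \eqref{eq:Ne6} of the Friedrichs solution $\nu_{\alpha}=\nu_0$ (which has the greatest lower bound $\xi_1=x_1(0)=0$), any other solution $\mu\ne\nu_0$ satisfies $\xi(\mu)<0$ and so charges $(-\infty,0)$. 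Thus $\nu_0$ is the unique solution supported in $[0,\infty)$ and the problem is det(S).
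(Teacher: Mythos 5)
The paper itself offers no proof of Proposition~\ref{thm:BV} --- it is quoted from \cite{B0} and \cite[Section 2.2]{B:V} --- so your argument has to stand on its own, and most of it does. The monotonicity of $j$ via \eqref{eq:interD}, the limits at the poles $\beta_k$, the Casoratian identity $P_{n+1}(0)Q_n(0)-P_n(0)Q_{n+1}(0)=-1/b_n$ giving the strict increase and negativity of $P_n(0)/Q_n(0)$, and the interchange of limits are all correct. In particular your identity $D_n(x)B_{n+1}(x)-D_{n+1}(x)B_n(x)=xP_{n+1}^2(x)$ checks out: using the sum representations it reduces to $D_n(x)Q_{n+1}(0)-B_n(x)P_{n+1}(0)=P_{n+1}(x)$, which follows from the determinant formulas of Proposition~\ref{thm:An-Dn} together with the Casoratian; and since $B_n(x)\ge B_1(x)>0$ uniformly in $n$ for $x$ below the negative root of $x(x-a_0)=b_0^2$, the double-monotonicity (Moore--Osgood) argument does yield $\alpha=\alpha_\infty$. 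The indet(S) half of the dichotomy is also sound: for $\alpha<0$ the N-extremal measures $\nu_t$, $\alpha\le t\le0$, are already infinitely many distinct solutions in $[0,\infty)$.

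The gap is in the det(S) half. Reading the graph of $j$ only shows that $\nu_0$ is the unique \emph{N-extremal} solution supported in $[0,\infty)$; to exclude the non-extremal solutions $\nu_\varphi$ you invoke \eqref{eq:Ne6}. But within this paper \eqref{eq:Ne6} is exactly Theorem~\ref{thm:Fr2} (extended to the case $\alpha(s)=0$ you need), and the proof of Theorem~\ref{thm:Fr2} proceeds by translating the problem, computing $\alpha(\tilde s)=0$, and then \emph{using the det(S) conclusion of Proposition~\ref{thm:BV}} to conclude $\tau(\nu)=\tau(\nu_{\alpha(s)})$. So as written your argument is circular, unless you import Chihara's independent chain-sequence proof of \eqref{eq:Ne6} wholesale --- which is essentially the content one is being asked to establish. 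What is missing is an independent argument that $\alpha(s)=0$ forces uniqueness among \emph{all} solutions supported in $[0,\infty)$, not just among the $\nu_t$. Two standard routes: (i) Pedersen's characterization (Theorem~\ref{thm:Ped}), which for $\alpha(s)=0$ forces $\varphi(x)=0$ for $x<0$ and hence $\varphi\equiv0$ by the identity theorem; or (ii) the classical Weyl-interval argument, showing that for $x<0$ the Cauchy transform $\int(t-x)^{-1}d\mu(t)$ of any solution $\mu$ supported in $[0,\infty)$ is squeezed between the values for $\nu_0$ and $\nu_{\alpha(s)}$, which coincide when $\alpha(s)=0$ and so determine $\mu$. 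Either of these requires a genuine additional argument that your proposal does not supply.
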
 

\begin{center}
\includegraphics[scale=0.6]{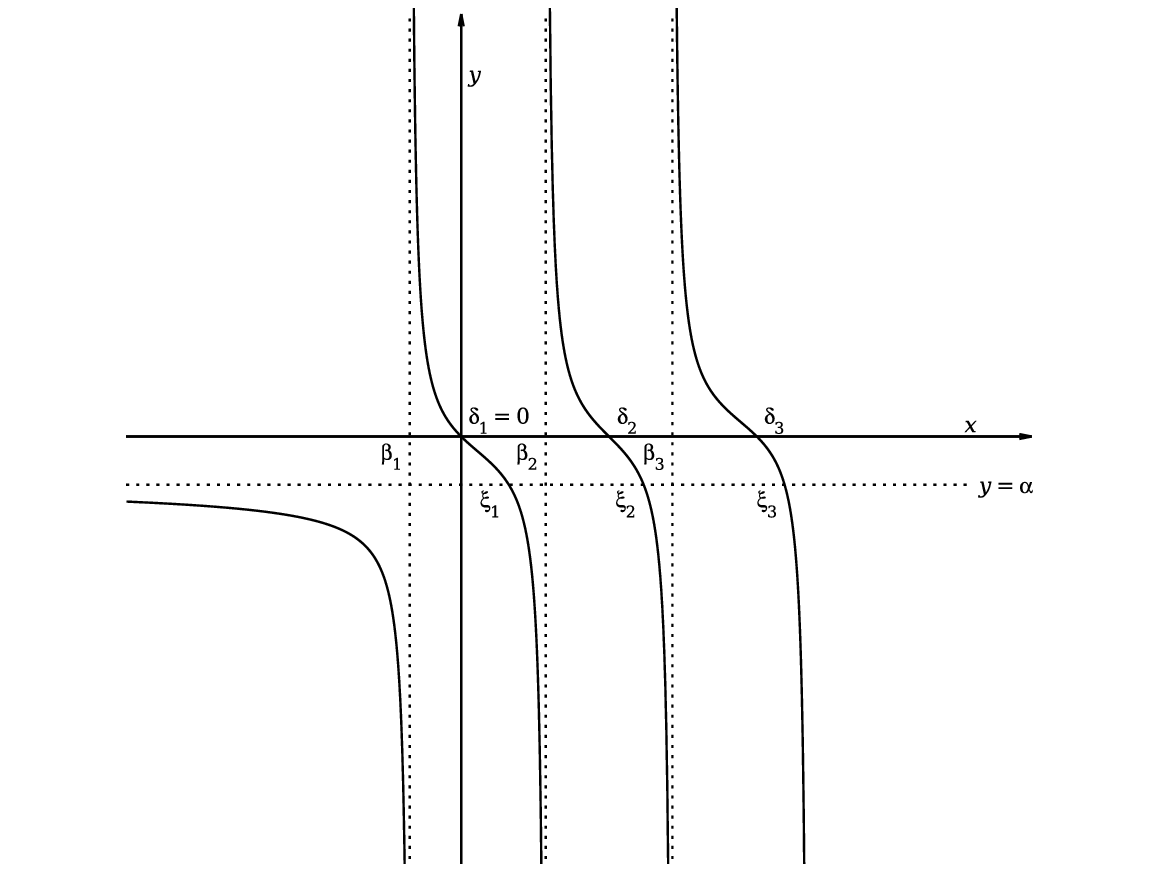}\\
{The graph of $j(x)=D(x)/B(x)$}
\end{center}

\medskip
The quantity $\alpha(s)$ can also be determined from the functions $A, C$:

\begin{prop}\label{thm:AC} Let $s=(s_n)$ be a normalized  indeterminate Hamburger moment sequence which is Stieltjes. The function $l(x)=C(x)/A(x)$ is strictly decreasing in each of the intervals $(-\infty,\alpha_1), (\alpha_1,\alpha_2), (\alpha_2,\alpha_3),\ldots$ with limits
$$
l(\alpha_k-)=-\infty,\quad l(\alpha_k+)=\infty,\quad k=1,2,\ldots.
$$
We have
\begin{equation}\label{eq:alpha*}
\alpha(s)=\lim_{x\to -\infty}\frac{C(x)}{A(x)}.
\end{equation}
\end{prop}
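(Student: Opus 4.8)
The plan is to mirror the proof of Proposition~\ref{thm:BV}, replacing the pair $(B,D)$ by $(A,C)$ and using the Wronskian-type identity \eqref{eq:interA} in place of \eqref{eq:interD}. For the monotonicity I would compute, on any interval where $A$ does not vanish,
$$
l'(x)=\frac{C'(x)A(x)-C(x)A'(x)}{A(x)^2}
     =-\frac{A'(x)C(x)-C'(x)A(x)}{A(x)^2}
     =-\frac{\sum_{k=0}^\infty Q_k^2(x)}{A(x)^2}.
$$
Since $Q_1(x)=1/b_0\neq 0$, the sum in the numerator is strictly positive for every real $x$, so $l'(x)<0$ wherever $A(x)\neq 0$. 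Hence $l$ is strictly decreasing on each of the intervals $(-\infty,\alpha_1),(\alpha_1,\alpha_2),\ldots$ cut out by the real simple zeros of $A$.

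Next, for the boundary behaviour at a zero $\alpha_k$ of $A$, I would use that the zeros are simple, so $A(x)=A'(\alpha_k)(x-\alpha_k)+o(x-\alpha_k)$ with $A'(\alpha_k)\neq 0$, while $C(\alpha_k)\neq 0$ because the zeros of $A$ and $C$ interlace and are therefore disjoint. From the sign relation $A'(\alpha_k)C(\alpha_k)>0$ recorded just after \eqref{eq:interA}, the quotient $C(\alpha_k)/A'(\alpha_k)$ is positive, so near $\alpha_k$ one has $l(x)\sim\bigl(C(\alpha_k)/A'(\alpha_k)\bigr)/(x-\alpha_k)$. This tends to $-\infty$ as $x\uparrow\alpha_k$ and to $+\infty$ as $x\downarrow\alpha_k$, giving $l(\alpha_k-)=-\infty$ and $l(\alpha_k+)=+\infty$.

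Finally, for the limit formula \eqref{eq:alpha*} I would exploit the determinant identity $A(x)D(x)-B(x)C(x)=1$. Combining fractions gives
$$
\frac{C(x)}{A(x)}-\frac{D(x)}{B(x)}
 =\frac{B(x)C(x)-A(x)D(x)}{A(x)B(x)}
 =\frac{-1}{A(x)B(x)}.
$$
By \eqref{eq:zeroAC} we have $A(x)\le x/b_0^2\to-\infty$ and by \eqref{eq:zeroBD} we have $B(x)\ge \tfrac{1}{b_0^2}x(x-a_0)-1\to+\infty$ as $x\to-\infty$, so $|A(x)B(x)|\to\infty$ and the right-hand side tends to $0$. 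Since $\lim_{x\to-\infty}D(x)/B(x)=\alpha(s)$ by Proposition~\ref{thm:BV}, it follows at once that $\lim_{x\to-\infty}C(x)/A(x)=\alpha(s)$ as well.

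All of these steps are routine once the machinery of Section~2 and the estimates of Section~3 are in hand; the only genuinely new idea is the determinant-identity reduction in the last paragraph, which transfers the already known asymptotics of $D/B$ to $C/A$ at essentially no cost. I expect no serious obstacle beyond careful bookkeeping of signs in the boundary-limit computation.
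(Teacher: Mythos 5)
Your proposal is correct, and the decisive step---transferring the known limit $\lim_{x\to-\infty}D(x)/B(x)=\alpha(s)$ to $C(x)/A(x)$ via the determinant identity $AD-BC=1$, so that $C/A-D/B=-1/(AB)\to 0$ because $A(x)\to-\infty$ and $B(x)\to+\infty$---is exactly the argument the paper uses. Where you genuinely diverge is in the monotonicity part: the paper simply invokes the fact (from Berg--Szwarc, and re-derived in its Section 4 from the shifted problem via $b_0^2B^\dagger/D^\dagger=-a_0-C/A$) that $A(z)/C(z)$ is a Pick function, and does not spell out the boundary limits $l(\alpha_k\pm)=\mp\infty$ at all. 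You instead prove monotonicity directly from the Wronskian identity \eqref{eq:interA}, computing $l'(x)=-\bigl(\sum_{k=0}^\infty Q_k^2(x)\bigr)/A(x)^2<0$ (with strict positivity of the sum guaranteed by $Q_1\equiv 1/b_0$), and you derive the one-sided limits from the simplicity of the zeros of $A$ together with the sign relation $A'(\alpha_k)C(\alpha_k)>0$. This is more self-contained and elementary: it uses only the identities already established in Section 2, at the cost of a little extra sign bookkeeping, whereas the paper's citation of the Pick-function property packages the monotonicity and the pole behaviour into one known structural fact. Both routes are sound; yours has the advantage of not depending on the external reference, and it is consistent with how the analogous statement for $D/B$ is structured in Proposition~\ref{thm:BV}.
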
 

\begin{proof} We know from Berg--Szwarc \cite{B:S} that $A(z)/C(z)$ is a Pick function so $C(x)/A(x)$ is  strictly decreasing in the intervals $(-\infty,\alpha_1),(\alpha_k,\alpha_{k+1}),k=1,2,\ldots$. (That $A(z)/C(z)$ is a Pick function is also deduced in Section 4 of the present paper).  We have $A(x)<0, B(x)>0$ for $x<\beta_1$, so for those $x$  we have
$D(x)/B(x)-C(x)/A(x)=1/(A(x)B(x))\to 0$ for $x\to -\infty$ because $A(x)\to -\infty$ and
$B(x)\to \infty$ by \eqref{eq:zeroAC} and \eqref{eq:zeroBD}.
This shows \eqref{eq:alpha*}.
\end{proof}

If the Stieltjes problem is det(S), i.e., $\alpha(s)=0$ then $\nu_0$ is the only solution to the Hamburger problem which is concentrated on $[0,\infty[$.  For all other solutions $\mu$ we have $\xi(\mu)<0$.   

Let us now assume that the Stieltjes problem is indet(S), i.e., that $\alpha(s)<0$.

For $t\in\R\cup\{\infty\}$ the N-extremal solutions $\nu_t\in V$ are the discrete  
measures satisfying
\begin{equation}\label{eq:Nev-par-ext}
\int\frac{d\nu_t(x)}{x-z}=-\frac{A(z)t-C(z)}{B(z)t-D(z)},\quad z\in\C\setminus\R,
\end{equation}
so $\nu_t$ is concentrated in the zero-set 
$$
\Lambda_t:=\{x\in\R \mid B(x)t-D(x)=0\}=\{x\in\R \mid j(x)=t\},
$$
which can be described as a strictly increasing sequence $(x_n(t))_{n\ge 1}$.
From Proposition~\ref{thm:BV} we clearly have:

(i) For $t>0$: $x_n(t)\in (\beta_n,\delta_n)$, so $\nu_t$ has one negative mass-point $x_1(t)$  and
the other mass-points are positive. When $t$ increases from 0 to $\infty$, then $x_1(t)$ decreases from $\delta_1=0$ to $\beta_1$ with  $x_1(\infty)=\beta_1$. 

(ii) For $t<\alpha(s)$: $x_1(t)\in (-\infty,\beta_1),\;x_n(t)\in (\delta_{n-1},\beta_n)$ for $n\ge 2$, so $\nu_t$ has one negative mass-point and the other mass-points are positive.
When $t$ increases from $-\infty$ to $\alpha(s)$, then $x_1(t)$ decreases from $\beta_1$ to $-\infty$.

(iii) For $t=\infty$: $x_n(t)=\beta_n$, so $\nu_\infty$ is concentrated in the zero-set of $B$ and has one negative mass-point.

(iv) For $\alpha(s)< t< 0$: $x_n(t)\in (\delta_n,\beta_{n+1})$, so $\nu_t$ is supported by a sequence belonging to $(0,\infty)$. When $t$ increases from $\alpha(s)$ to 0, then $x_1(t)$ decreases from $\xi_1$ to 0.

(v) For $t=0$: $x_n(t)=\delta_n$, so $\nu_0$ is supported by the zero-set of $D$. It is called the Krein solution of the moment problem.

(vi) For $t=\alpha(s)$: $x_n(t)>\delta_n$, and $x_1(\alpha(s))>x_1(t)$ for all $t\neq \alpha(s)$ (including $t=\infty$). The measure $\nu_{\alpha(s)}$ is called the Friedrichs solution of the moment problem. 

That the Friedrichs solution $\nu_{\alpha(s)}$ corresponds to the Friedrichs extension of the  Jacobi matrix $J$  or the Jacobi operator $(T,D(T))$ was established in \cite{Pe1}. In the paper \cite{Pe2} Pedersen gave a very satisfactory characterization of the solutions $\nu_{\varphi}\in V$ which are supported by $[0,\infty)$, in terms of the corresponding Pick function $\varphi$. For the benefit of the reader we repeat it here.

\begin{thm}\cite{Pe2}\label{thm:Ped} The solution $\nu_\varphi\in V$ corresponding to the Pick function $\varphi$ is supported by $[0,\infty)$ if an only if $\varphi$ has a holomorphic extension from $\C\setminus\R$ to $\C\setminus [0,\infty)$ such that 
$$
\alpha(s)\le \varphi(x)\le 0,\quad x<0.
$$
\end{thm}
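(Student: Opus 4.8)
The plan is to combine the elementary support criterion with the Möbius structure of the parametrization \eqref{eq:Nev-par}. Write $G_\varphi(z)=\int d\nu_\varphi(x)/(x-z)$. A probability measure is supported in $[0,\infty)$ if and only if its Cauchy transform continues holomorphically from $\C\setminus\R$ to $\C\setminus[0,\infty)$, so the whole statement is really about the holomorphic continuation of $G_\varphi$ across $(-\infty,0)$. By \eqref{eq:Nev-par}, $G_\varphi=-(A\varphi-C)/(B\varphi-D)$, and since $AD-BC=1$ this inverts to $\varphi=(C+G_\varphi D)/(A+G_\varphi B)$; moreover a one-line computation gives the key identity $\varphi-j=-1/\bigl(B(A+G_\varphi B)\bigr)$, where $j(x)=D(x)/B(x)$. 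Because $A,B,C,D$ are entire and real on the line, holomorphy of $G_\varphi$ and of $\varphi$ across $(-\infty,0)$ are interchangeable apart from the poles created by the denominators $B\varphi-D=B(\varphi-j)$ and $A+G_\varphi B$. I will control these denominators on $(-\infty,0)$ using the monotonicity and the limits of $j$ from Proposition~\ref{thm:BV}, the analogous facts for $l(x)=C(x)/A(x)$ from Proposition~\ref{thm:AC}, the identity $\alpha(s)=\lim_{x\to-\infty}j(x)$ from \eqref{eq:alpha}, and the sign data \eqref{eq:zeroBD}, \eqref{eq:zeroAC}.

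For the implication from the bounds to the support I assume $\varphi$ holomorphic on $\C\setminus[0,\infty)$ with $\alpha(s)\le\varphi(x)\le 0$ for $x<0$, and show $B\varphi-D$ has no zero on $(-\infty,0)$. On $(-\infty,\beta_1)$ Proposition~\ref{thm:BV} gives $j(x)<\alpha(s)\le\varphi(x)$ and $B(x)>0$, so $B(\varphi-j)>0$; at $\beta_1$ one has $B(\beta_1)=0$ and $B\varphi-D=-D(\beta_1)>0$ by \eqref{eq:zeroBD}; on $(\beta_1,0)$ one has $j(x)>0\ge\varphi(x)$ with $B(x)<0$, so again $B(\varphi-j)>0$. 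Off the real axis $G_\varphi$ is the genuine transform, so the denominator cannot vanish there either (the determinant $AD-BC=1$ forbids $A\varphi-C$ and $B\varphi-D$ from vanishing simultaneously). Hence $G_\varphi$ continues holomorphically to $\C\setminus[0,\infty)$ and $\nu_\varphi$ is supported in $[0,\infty)$.

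For the converse I assume $\nu_\varphi$ supported in $[0,\infty)$, so $G_\varphi$ is holomorphic on $\C\setminus[0,\infty)$ and, on $(-\infty,0)$, is real, positive, strictly increasing, with $G_\varphi(x)\to 0$ as $x\to-\infty$; consequently $\varphi=(C+G_\varphi D)/(A+G_\varphi B)$ is a priori only meromorphic across $(-\infty,0)$, and holomorphy of $G_\varphi$ forces $\varphi\ne j$ wherever $\varphi$ is finite. The decisive step is to exclude poles of $\varphi$ on $(-\infty,0)$ (note $A(\beta_1)\ne 0$, so no pole sits at $\beta_1$). If $\varphi$ had a pole $x_0$, its residue would be negative because $\varphi$ is a Pick function, so on the interval between $x_0$ and $\beta_1$ the difference $\varphi-j$ would run between $-\infty$ and $+\infty$ (using that $j$ tends to $-\infty$ at $\beta_1^-$ and to $+\infty$ at $\beta_1^+$ by Proposition~\ref{thm:BV}), and would therefore vanish, contradicting $\varphi\ne j$. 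With poles excluded, $\varphi$ is holomorphic and hence increasing on $(-\infty,0)$. On $(-\infty,\beta_1)$ the identity $\varphi-j=-1/\bigl(B(A+G_\varphi B)\bigr)$ shows $\varphi-j$ is nonvanishing and keeps the sign it has near $\beta_1^-$, namely $\varphi>j$; letting $x\to-\infty$ gives $\lim\varphi\ge\lim j=\alpha(s)$, and monotonicity yields $\varphi(x)\ge\alpha(s)$. For the upper bound, $A(0)=0$, $B(0)<0$ and $G_\varphi(0^-)>0$ reduce the same identity at $0$ to $\varphi(0^-)=-1/\bigl(B(0)^2G_\varphi(0^-)\bigr)\le 0$, so monotonicity gives $\varphi\le 0$; the inverted relation then shows $\varphi$ is holomorphic on all of $\C\setminus[0,\infty)$.

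The main obstacle is precisely the exclusion of poles of $\varphi$ in the converse: pointwise positivity of $G_\varphi$ at individual points is not sufficient, and the argument must exploit the global holomorphy of $G_\varphi$ across the whole negative axis together with the blow-up of $j$ at the smallest zero $\beta_1$ of $B$ to force the sign change that contradicts $\varphi\ne j$. Once poles are ruled out, both bounds come cheaply from the monotonicity of the now-holomorphic Pick function $\varphi$ and the values of $j$ at $-\infty$ and at $0$.
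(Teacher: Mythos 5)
The paper offers no proof of Theorem~\ref{thm:Ped}: it is quoted from Pedersen \cite{Pe2} ``for the benefit of the reader'', so there is no in-paper argument to compare yours against; what you have written is a self-contained proof in the spirit of Pedersen's original one. Your two key identities are correct consequences of $AD-BC=1$, namely $A+G_\varphi B=-1/(B\varphi-D)$, hence $\varphi-j=-1/\bigl(B(A+G_\varphi B)\bigr)$ and $\varphi=(C+G_\varphi D)/(A+G_\varphi B)$, and the reduction of the support condition to holomorphic continuation of $G_\varphi$ across $(-\infty,0)$ is the standard Stieltjes inversion argument. The forward direction checks out: on $(-\infty,\beta_1)$ one has $B>0$ and $j<\alpha(s)\le\varphi$ by Proposition~\ref{thm:BV}, at $\beta_1$ one has $B\varphi-D=-D(\beta_1)>0$ by \eqref{eq:zeroBD}, and on $(\beta_1,0)$ one has $B<0$ and $j>0\ge\varphi$, so $B\varphi-D>0$ throughout $(-\infty,0)$ and $G_\varphi$ continues. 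In the converse, the two bounds also come out correctly: $\varphi>j$ on $(-\infty,\beta_1)$ gives $\varphi\ge\alpha(s)$ in the limit $x\to-\infty$ by \eqref{eq:alpha}, and $\varphi(0^-)=-1/\bigl(B(0)^2G_\varphi(0^-)\bigr)\le 0$ (including the degenerate case $G_\varphi(0^-)=+\infty$) gives the upper bound via monotonicity.

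Two spots deserve tightening, both minor. First, in the pole-exclusion step, ``the interval between $x_0$ and $\beta_1$'' is ambiguous if $\varphi$ has several poles on that side; you should take $x_0$ to be the pole nearest to $\beta_1$, which exists because $A+G_\varphi B$ is holomorphic near $\beta_1$ and equals $A(\beta_1)\neq 0$ there (by \eqref{eq:zeroAC}, since $\beta_1<0=\alpha_1$), so its zeros cannot accumulate at $\beta_1$. On the resulting interval $\varphi$ is continuous, $\varphi-j$ tends to $-\infty$ at one endpoint and to $+\infty$ at the other (using the negative residue at $x_0$ and $j(\beta_1\mp)=\mp\infty$ from Proposition~\ref{thm:BV}), and the intermediate value theorem then contradicts $\varphi\neq j$. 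Second, the assertion that a real pole of the (real) meromorphic continuation of a Pick function is simple with negative residue should be justified, e.g.\ from the Herglotz representation: near such a point the representing measure reduces to a point mass $m>0$, contributing $m/(x_0-z)$. With these two remarks inserted, your proof is complete and correct.
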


We shall now describe the Friedrichs solution $\nu_{\alpha(s)}$ in more detail.

\begin{thm}\label{thm:Fr1} The sequence $(\xi_k)$ defined in \eqref{eq:xik} satisfies $0<\xi_1<\xi_2<\ldots<\xi_k\to\infty$ and the $\xi_k$  are the zeros of $B(z)\alpha(s)-D(z)$. The support of the Friedrichs solution is $(\xi_k)$.
\end{thm}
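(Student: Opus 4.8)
The plan is to reduce the entire statement to one algebraic identity, after which Hurwitz's theorem does all the analytic work. The idea is to exhibit the orthonormal polynomial $P_n$ itself, up to a nonzero constant, as the $n$-th approximant to $B(z)\alpha(s)-D(z)$; then the zeros $x_{n,k}$ of $P_n$ are literally the zeros of an entire function converging to $B\alpha(s)-D$, and their limits $\xi_k$ must be its zeros.

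First I would set $t_n:=P_n(0)/Q_n(0)$ and compute $B_n(u)t_n-D_n(u)$ from the $v=0$ specializations in Proposition~\ref{thm:An-Dn}. Writing $B_n(u)=b_n\bigl(P_{n+1}(u)Q_n(0)-Q_{n+1}(0)P_n(u)\bigr)$ and $D_n(u)=b_n\bigl(P_{n+1}(u)P_n(0)-P_{n+1}(0)P_n(u)\bigr)$, the coefficient of $P_{n+1}(u)$ in the combination is $b_n\bigl(Q_n(0)t_n-P_n(0)\bigr)=0$ by the very choice of $t_n$. What survives is a multiple of $P_n(u)$, and evaluating the surviving bracket by the Casoratian $P_{n+1}Q_n-P_nQ_{n+1}=-1/b_n$ at $0$ gives the clean identity
\[
B_n(u)t_n-D_n(u)=-\frac{P_n(u)}{Q_n(0)},
\]
so that the zero set of $u\mapsto B_n(u)t_n-D_n(u)$ is exactly $\{x_{n,1}<\dots<x_{n,n}\}$. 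Discovering this identity is the crux of the whole argument; once it is in hand, verifying the cancellation and the correct sign of the Casoratian is the one place where genuine care is needed.

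Next I would pass to the limit $n\to\infty$. By \eqref{eq:alpha} we have $t_n\to\alpha(s)$, while $B_n\to B$ and $D_n\to D$ locally uniformly on $\C$, the tails of the defining series being controlled by Cauchy--Schwarz since $(P_k(u))$ and $(Q_k(0))$ lie in $\ell^2$. Hence $B_n(\cdot)t_n-D_n(\cdot)\to B(\cdot)\alpha(s)-D(\cdot)$ locally uniformly, and the limit is not identically zero. I would then invoke Hurwitz's theorem via the argument principle on thin rectangles straddling $[0,x_K(\alpha(s))]$: since $B(0)\alpha(s)-D(0)=-\alpha(s)>0$ the origin is not a zero of the limit, and the zeros $x_k(\alpha(s))$ are simple, so for each $K$ exactly $K$ zeros of $P_n$ eventually lie below $x_K(\alpha(s))+\eps$ and none in a small left neighbourhood of $x_K(\alpha(s))$. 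Because the $x_{n,k}$ increase in $k$, this pins down $x_{n,K}\to x_K(\alpha(s))$, and comparison with \eqref{eq:xik} yields $\xi_K=x_K(\alpha(s))$ for every $K$.

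Finally the three assertions follow at once from this identification. The sequence $(x_k(\alpha(s)))$ is strictly increasing with $x_1(\alpha(s))>\delta_1=0$ and $x_k(\alpha(s))>\delta_k\to\infty$ by item (vi) and Proposition~\ref{thm:BV}, giving $0<\xi_1<\xi_2<\dots\to\infty$; the $\xi_k$ are by construction the zeros of $B(z)\alpha(s)-D(z)$; and since the Friedrichs solution $\nu_{\alpha(s)}$ is supported exactly on $\Lambda_{\alpha(s)}=\{x_k(\alpha(s))\}$ through \eqref{eq:Nev-par-ext}, its support is precisely $(\xi_k)$. Beyond finding the identity, the only delicate bookkeeping is in the Hurwitz step, where one must guarantee that the index $k$ is preserved in the limit; the counting argument above handles this, and everything else is either the identity or already recorded in Section~3.
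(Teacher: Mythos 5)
Your proof is correct and follows essentially the same route as the paper: the identity $B_n(u)t_n-D_n(u)=-P_n(u)/Q_n(0)$ with $t_n=P_n(0)/Q_n(0)$ is precisely the source of the convergence $-P_n(z)/Q_n(0)\to B(z)\alpha(s)-D(z)$ that the paper quotes from Pedersen as its starting point \eqref{eq:el}, and your Hurwitz/argument-principle count plays the role of the paper's combined Rouch\'e, critical-point and interlacing steps for identifying the zeros. The only differences are presentational: you derive the finite-$n$ identity explicitly and consolidate the zero bookkeeping into a single counting argument, obtaining $\xi_1>0$ from the location $x_k(\alpha(s))\in(\delta_k,\beta_{k+1})$ given by Proposition~\ref{thm:BV}, where the paper instead gets $\xi_1>0$ directly from $\alpha(s)<0$ together with $D(0)=0$ and the interlacing of the zeros of $B$ and $D$.
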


\begin{proof} It is elementary to see that
\begin{equation}\label{eq:el}
-\frac{P_n(z)}{Q_n(0)}\to B(z)\alpha(s)-D(z),\quad z\in\C,
\end{equation}
locally uniformly in $z$, cf. \cite[Equation (8)]{Pe1}, and hence also the derivatives $-P'_n(z)/Q_n(0)$ converge to the derivative $B'(z)\alpha(s)-D'(z)$. Define $y_{n,k}$ such that  $x_{n,k}<y_{n,k}<x_{n,k+1}$ be such that $P_n'(y_{n,k})=0$ for $k=1,\ldots,n-1$.

From \eqref{eq:el} it is clear that each $\xi_k$ is a zero of 
$B(z)\alpha(s)-D(z)$. We claim that $\xi_k<\xi_{k+1}$, for if there was equality we would have $\lim_{n\to\infty}y_{n,k}=\xi_k$, and hence $B'(\xi_k)\alpha(s)-D'(\xi_k)=0$, contradicting that the zeros of $B(z)\alpha(s)-D(z)$ are simple.  In addition $\xi_k\to\infty$ for otherwise the zeros accumulate. 

Finally $B(z)\alpha(s)-D(z)$ has no other zeros than $\xi_k$, for if $\xi$ is a real zero different from all $\xi_k$, then there exists an $\varepsilon>0$ such that
\begin{equation}\label{eq:hlp}
[\xi-\varepsilon,\xi+\varepsilon]\cap [\xi_k-\varepsilon,\xi_k+\varepsilon]=\emptyset, \forall k.
\end{equation}
By the Theorem of Rouch\'e $P_n$ must have zeros in $(\xi-\varepsilon,\xi+\varepsilon)$ for $n$ sufficiently large, but this is impossible because of \eqref{eq:hlp}. 

Clearly $\xi_1\ge 0$, but if $\xi_1=0$ then $D(\xi_1)=0$, hence $B(\xi_1)\alpha(s)=0$, which gives the contradiction $B(\xi_1)=0$ since we assume $\alpha(s)<0$.
\end{proof}

We know that the Friedrich measure $\nu_{\alpha(s)}$ satisfies
\begin{equation}\label{eq:el1}
\xi_1=\xi(\nu_{\alpha(s)}),\quad \alpha(s)=D(\xi_1)/B(\xi_1).
\end{equation}

We have the following result about $\xi_1$, which is the same as \cite[Lemma 2]{Ch1}, except that there is no mentioning of the Friedrichs extension.

\begin{thm}\label{thm:Fr2} Let $s$ be a normalized Stieltjes moment sequence which is indet(S). For any solution $\nu\in V$ to the indeterminate Hamburger problem we have
$\xi(\nu)<\xi(\nu_{\alpha(s)})$ unless $\nu$ is the Friedrichs solution $\nu_{\alpha(s)}$.
\end{thm}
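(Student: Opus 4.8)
The plan is to split the statement into the inequality $\xi(\nu)\le\xi_1$, valid for every solution, and the rigidity statement that equality forces $\nu=\nu_{\alpha(s)}$; here $\xi_1=\xi(\nu_{\alpha(s)})$ equals the common lower bound $m=\inf\{\langle Je,e\rangle\mid e\in\mathcal F,\ \|e\|=1\}$ of the Jacobi matrix and its Friedrichs extension. The inequality is the soft part and I would obtain it from a quadratic-form comparison; the rigidity is where the real work lies, and I would reduce it, by a translation, to the det(S) dichotomy recalled in Section 3 together with Theorem~\ref{thm:Fr1}.

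For the inequality, recall that $e_n\mapsto P_n$ embeds $\ell^2$ isometrically into $L^2(\nu)$, since the $P_n$ are orthonormal in $L^2(\nu)$ for every $\nu\in V$. Under this embedding a finite vector $e=(c_0,\dots,c_N)$ corresponds to the polynomial $f=\sum c_nP_n$, and the three-term recurrence \eqref{eq:3term} shows that $J$ is carried to multiplication by $x$. Hence
$$
\langle Je,e\rangle=\int x\,|f(x)|^2\,d\nu(x)\ge \xi(\nu)\int|f|^2\,d\nu=\xi(\nu)\,\|e\|^2,
$$
because $\nu$ is supported in $[\xi(\nu),\infty)$. Taking the infimum over unit vectors $e\in\mathcal F$ gives $m\ge\xi(\nu)$, that is $\xi(\nu)\le\xi_1$ for every solution $\nu$.

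For the rigidity I would assume $\xi(\nu)=\xi_1$, so that $\supp(\nu)\subseteq[\xi_1,\infty)$, and push $\nu$ forward under $x\mapsto x-\xi_1$ to a measure $\nu'$ supported in $[0,\infty)$. It solves the shifted, still normalized and indeterminate, Stieltjes moment problem whose Jacobi operator is $J-\xi_1 I$. This operator is nonnegative with lower bound $0$, so the lower bound of its Friedrichs extension, and hence the first support point $\xi_1'$ of the associated Friedrichs measure, equals $0$. Were the shifted problem indet(S), Theorem~\ref{thm:Fr1} applied to it would force $\xi_1'>0$, a contradiction; thus the shifted problem is det(S). By the det(S) dichotomy there is then a unique solution of the shifted problem supported in $[0,\infty)$. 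Since $\nu'$ is such a solution, and since by Theorem~\ref{thm:Fr1} the Friedrichs measure of $s$ is supported in $[\xi_1,\infty)$ so that its translate is supported in $[0,\infty)$, uniqueness forces $\nu'$ to be that translate, i.e. $\nu=\nu_{\alpha(s)}$.

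The main obstacle is this rigidity step, and within it the delicate point is the passage to the shifted sequence: one must check that the translation preserves normalization, indeterminacy and the Stieltjes property, that the Friedrichs lower bound of $J-\xi_1 I$ is exactly $0$ (using that the Friedrichs extension has the same lower bound as the Jacobi matrix), and that det(S) uniqueness then pins down $\nu'$ exactly. Once this is in place the conclusion $\xi(\nu)<\xi_1$ for $\nu\neq\nu_{\alpha(s)}$ is immediate, since $\xi(\nu)\le\xi_1$ always holds and equality has just been excluded.
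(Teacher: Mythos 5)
Your overall strategy for the hard step coincides with the paper's: translate by $\xi_1$, show the translated Stieltjes problem is det(S), and invoke uniqueness of the $[0,\infty)$-supported solution. The genuine gap is in how you establish det(S): you route everything through the identification of $\nu_{\alpha(s)}$ with the operator-theoretic Friedrichs extension, using it both to assert $\xi_1=m=\inf\{\langle Je,e\rangle\}$ and to conclude that the first support point of the ``Friedrichs measure'' of the translated problem is $0$. But in this paper ``the Friedrichs solution'' $\nu_{\alpha(s)}$ is merely a name for the N-extremal measure with parameter $t=\alpha(s)$, and the paper explicitly records that Pedersen's proof that this measure really is the spectral measure of the Friedrichs extension \emph{uses} the extremality statement \eqref{eq:Ne6} --- i.e., the very theorem you are proving. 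As written, your rigidity step is therefore circular relative to the paper's logical development, and the claim ``the Friedrichs lower bound of $J-\xi_1 I$ is exactly $0$'' is exactly the unproved point you yourself flag.

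The gap is repairable, and in two ways. The paper's way is to avoid operator theory entirely: using Pedersen's translation formulas for the Nevanlinna matrix and the relation $\alpha(s)=D(\xi_1)/B(\xi_1)$ from \eqref{eq:el1}, one computes $\tilde{D}(x)/\tilde{B}(x)\to 0$ as $x\to-\infty$, so $\alpha(\tilde{s})=0$ and Proposition~\ref{thm:BV} gives det(S). Alternatively, your argument can be salvaged without the Friedrichs extension at all: the orthonormal polynomials of the translated problem are $P_n(x+\xi_1)$, so by \eqref{eq:xik} the quantity $\tilde{\xi}_1$ equals $\lim_n(x_{n,1}-\xi_1)=0$; since Theorem~\ref{thm:Fr1} shows $\tilde{\xi}_1>0$ whenever the problem is indet(S), the translated problem must be det(S). (Similarly, your soft inequality $\xi(\nu)\le\xi_1$ is fine once you justify $m=\xi_1$ non-circularly, e.g.\ via $m=\inf_n\lambda_{\min}(J_n)=\inf_n x_{n,1}=\xi_1$; but note it is not actually needed, since the rigidity argument applies verbatim under the weaker hypothesis $\supp(\nu)\subseteq[\xi_1,\infty)$, whose contrapositive is the full statement.)
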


\begin{proof} Suppose that $\nu$ is a solution satisfying $\supp(\nu)\subseteq [\xi_1,\infty)$, and we want to show that $\nu=\nu_{\alpha(s)}$.

Considering the translation $\tau(x)=x-\xi_1$,  the image measures $\tau(\nu_{\alpha(s)}), \tau(\nu)$ are supported on $[0,\infty)$ and belong to the Stieltjes moment problem  with moments
$$
\tilde{s}_n=\int (x-\xi_1)^n\,d\nu(x)=\sum_{j=0}^n\binom{n}{j} (-\xi_1)^js_{n-j}.
$$
It is clearly indeterminate as Hamburger moment problem and the full set of solutions is
$\{\tau(\mu)\mid \mu\in V\}$. The orthonormal polynomials and those of the second kind are $P_n(x+\xi_1),Q_n(x+\xi_1)$. The corresponding Nevanlinna functions marked with  $\tilde{}$ are
$$
\tilde{A}(u,v)=A(u+\xi_1,v+\xi_1),\ldots,\tilde{D}(u,v)=D(u+\xi_1,v+\xi_1),\quad u,v\in\C
$$

From the formulas for the Nevanlinna functions of two variables expressed by the Nevanlinna functions of one variable, see Proposition~\ref{thm:2to1} (where $n\to\infty$), we get
\begin{eqnarray*}
\tilde{A}(z)&=&A(z+\xi_1)C(\xi_1)-C(z+\xi_1)A(\xi_1)\\
\tilde{B}(z)&=&B(z+\xi_1)C(\xi_1)-D(z+\xi_1)A(\xi_1)\\
\tilde{C}(z)&=&A(z+\xi_1)D(\xi_1)-C(z+\xi_1)B(\xi_1)\\
\tilde{D}(z)&=&B(z+\xi_1)D(\xi_1)-D(z+\xi_1)B(\xi_1).
\end{eqnarray*}
These formulas were found in \cite[Proposition 3.3]{Pe1}.
According to these formulas we get using \eqref{eq:el1}
$$
\frac{\tilde{D}(x)}{\tilde{B}(x)}=\frac{D(\xi_1)-B(\xi_1)\frac{D(x+\xi_1)}{B(x+\xi_1)}}
{C(\xi_1)-A(\xi_1)\frac{D(x+\xi_1)}{B(x+\xi_1)}}\to \frac{D(\xi_1)-B(\xi_1)\alpha(s)}
{C(\xi_1)-A(\xi_1)\alpha(s)}=0
$$
for $x\to -\infty$,  and this shows that $\tilde{\alpha}:=\alpha(\tilde{s})=0$ for the new Stieltjes sequence $\tilde{s}$, so $\tilde{s}$ is det(S). We conclude that $\tau(\nu_{\alpha(s)})=\tau(\nu)$
and hence $\nu_{\alpha(s)}=\nu$.
\end{proof}

\begin{rem}{\rm Let $s$ be a normalized indeterminate Hamburger moment sequence and assume that there exists a solution $\mu$ with $\xi(\mu)\in\R$. Then there exists a solution
$\mu_*$ such that $\xi(\mu)<\xi(\mu_*)$ for all solutions $\mu\neq \mu_*$.

This follows by using the translation $\tau(x)=x-\xi(\mu)$ to obtain a Stieltjes moment sequence.
}
\end{rem}

\section{The shifted problem}
If the first row and column of the Jacobi matrix \eqref{eq:Jac} is deleted, we obtain the
matrix

\begin{equation}\label{eq:Jacdag}
J^\dagger=\begin{pmatrix}
a_1 & b_1 & 0 & \hdots\\
b_1 & a_2 & b_2 & \hdots\\
0 & b_2 & a_3 & \hdots\\
\vdots &\vdots & \vdots & \ddots
\end{pmatrix},
\end{equation}
which by Favard's Theorem, cf. \cite[Theorem 5.14]{Sch}, is the Jacobi matrix of a uniquely determined  normalized moment sequence , which we denote $s^\dagger=(s_n^\dagger)$ and call the shifted moment sequence. The shifted problem has been studied by many authors,  and we shall use results from \cite{B0} and \cite{Pe}.

The original moment problem  is Stieltjes if and only if $J$ is positive in the sense that
$\langle Jc,c\rangle\ge 0$ for all sequences $c=(c_n)\in\mathcal F$.
Therefore, if $s$ is Stieltjes, then so is $s^\dagger$.

The orthonormal polynomials  and the associated polynomials for $s^\dagger$ are given as
\begin{equation}\label{eq:dag1}
P^{\dagger}_n(x)=b_0Q_{n+1}(x),\quad Q^\dagger_n(x)=P_1(x)Q_{n+1}(x)-\frac{1}{b_0}P_{n+1}(x),
\end{equation}
and these formulas show that $s$ is indeterminate as a Hamburger problem if and only if $s^\dagger$ is so. They are in \cite{Pe} as well as expressions for  the Nevanlinna functions
 for the shifted problem in terms of those for the original problem. In particular Pedersen found the formulas
 \begin{equation}\label{eq:BDdag}
 B^\dagger(z)=-C(z)-a_0A(z),\quad D^\dagger(z)=b_0^2A(z).
 \end{equation}
 (From this we  get $b_0^2B^\dagger(z)/D^\dagger(z)=-a_0-C(z)/A(z)$, and from this we see that $A(z)/C(z)$ is a Pick function, a property used in Proposition~\ref{thm:AC}.) 
 
 In the following we assume that $s$ is a Stieltjes sequence for an indeterminate Hamburger problem, and we shall relate $\alpha(s)$ and $\alpha(s^\dagger)$. 
 
First of all we know from Proposition~\ref{thm:BV} that the sequence $P_n(0)/Q_n(0)$ is strictly increasing to $\alpha(s)$. In particular
$$
P_1(0)/Q_1(0)=\frac{-a_0/b_0}{1/b_0}=-a_0<\alpha(s),
$$
and hence
\begin{equation}\label{eq:a_0}
\alpha(s)+a_0>0.
\end{equation}

Since $P^\dagger_n(x)=b_0Q_{n+1}(x)$, we know that the zeros $x_{n,k}$ of $P_n$ and 
the zeros $x^\dagger_{n,k}$ of $P^\dagger_n$ satisfy
$$
x_{n,1}<x^\dagger_{n-1,1}<x_{n,2}<x^\dagger_{n-1,2}\ldots<x^\dagger_{n-1,n-1}<x_{n,n},
$$
and therefore with the notation from Theorem~\ref{thm:Fr1}
$$
\xi_1\le \xi^\dagger_1\le \xi_2\le \xi^\dagger_2\le\ldots.
$$
We claim that there are strict inequalities everywhere.

We notice that
\begin{equation}\label{eq:dgalpha}
\alpha(s^\dagger)=-\frac{b_0^2}{\alpha(s)+a_0}<0,
\end{equation}
so the shifted problem is always indet(S), an observation also done in \cite[Remark 4.5]{B0}.

In fact, by \eqref{eq:BDdag} we find
$$
\alpha(s^\dagger)=\lim_{x\to-\infty}\frac{D^\dagger(x)}{B^\dagger(x)}=\lim_{x\to-\infty}\frac{b_0^2A(x)}{-C(x)-a_0A(x)},
$$
and \eqref{eq:dgalpha} follows because $C(x)/A(x)\to \alpha(s)$ for $x\to-\infty$ by
\eqref{eq:alpha*}.

We know that $\xi^\dagger_k$ are the zeros of
$$ 
B^\dagger(z)\alpha(s^\dagger)-D^\dagger(z)=-\frac{\alpha(s) b_0^2}{\alpha(s)+a_0}A(z)+\frac{b_0^2}{\alpha(s)+a_0} C(z).
$$
If $\alpha(s)=0$ this function is proportional to $C(z)$ so $\xi_k$ are the zeros of $D$ and $\xi^\dagger_k$ are the zeros of $C$ and hence $\xi_k\neq \xi^\dagger_k$. If $\alpha(s)<0$
then  $B^\dagger(z)\alpha(s^\dagger)-D^\dagger(z)$ is proportional to $A(z)\alpha(s)-C(z)$, which then has the zeros $\xi^\dagger_k$, while $B(z)\alpha(s)-D(z)$ has the zeros $\xi_k$, and again $\xi_k\neq \xi^\dagger_k$.

Summing up we have proved:

\begin{thm}\label{thm:final} Let $s$ be a normalized indeterminate Hamburger moment sequence which is Stieltjes. The zeros $x_{n,k}$ of $P_n$ and the zeros $x^\dagger_{n,k}$ of $Q_{n+1}$ have limits
$$
\xi_k:=\lim_{n\to\infty}x_{n,k},\quad \xi^\dagger_k:=\lim_{n\to\infty} x^\dagger_{n,k},\quad k=1,2,\ldots
$$  
satisfying
$$
\xi_1 < \xi^\dagger_1 < \xi_2 < \xi^\dagger_2 <\ldots.
$$
The zeros of $B(z)\alpha(s)-D(z)$ are $(\xi_k)$ and the zeros of $A(z)\alpha(s)-C(z)$ are
$(\xi^\dagger_k)$.  
\end{thm}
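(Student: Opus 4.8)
The plan is to assemble the statement from three ingredients: existence of the two limit sequences, identification of each as the zero set of an explicit entire function, and a clean disjointness argument that upgrades a weak interlacing to the asserted strict one.

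First I would settle existence and the weak interlacing. For each fixed $k$ the sequence $(x_{n,k})_{n\ge k}$ is decreasing in $n$ and bounded below by $0$ (the zeros lie in $(0,\infty)$ in the Stieltjes case), so $\xi_k$ exists as in \eqref{eq:xik}; the same reasoning applies to $(x^\dagger_{n,k})_{n\ge k}$, since by \eqref{eq:dag1} the polynomials $b_0Q_{n+1}=P^\dagger_n$ are the orthonormal polynomials of the shifted Stieltjes sequence $s^\dagger$, whence $\xi^\dagger_k$ likewise exists. The standard interlacing of the zeros of $P_n$ and those of $Q_n$ (equivalently $P^\dagger_{n-1}$) gives $x_{n,k}<x^\dagger_{n-1,k}<x_{n,k+1}$, and passing to the limit in $n$ yields the weak chain $\xi_k\le\xi^\dagger_k\le\xi_{k+1}$.

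Next I would identify the two limit sequences. Applying Theorem~\ref{thm:Fr1} to $s$ exhibits $(\xi_k)$ as exactly the zero set of $B(z)\alpha(s)-D(z)$; the underlying convergence \eqref{eq:el} does not use the sign of $\alpha(s)$, so this identification holds whether $s$ is det(S) or indet(S). For the shifted sequence, $s^\dagger$ is Stieltjes and, by \eqref{eq:dgalpha}, always indet(S), so Theorem~\ref{thm:Fr1} applies to $s^\dagger$ and presents $(\xi^\dagger_k)$ as the zero set of $B^\dagger(z)\alpha(s^\dagger)-D^\dagger(z)$. Inserting the explicit formulas \eqref{eq:BDdag} together with the value \eqref{eq:dgalpha}, and using $\alpha(s)+a_0>0$ from \eqref{eq:a_0}, this function simplifies to a nonzero constant times $A(z)\alpha(s)-C(z)$, so $(\xi^\dagger_k)$ is the zero set of $A(z)\alpha(s)-C(z)$.

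Finally, for the strict interlacing I would argue disjointness through the determinant identity $AD-BC\equiv 1$, which I expect to be the cleanest and the decisive step. If $\zeta$ were a common zero of $B\alpha(s)-D$ and $A\alpha(s)-C$, then $D(\zeta)=\alpha(s)B(\zeta)$ and $C(\zeta)=\alpha(s)A(\zeta)$, whence $A(\zeta)D(\zeta)-B(\zeta)C(\zeta)=0$, contradicting $AD-BC\equiv 1$; the same contradiction occurs in the boundary case $\alpha(s)=0$, where the two functions are $-D$ and $-C$. Hence the sequences $(\xi_k)$ and $(\xi^\dagger_k)$ are disjoint. Since the $\xi_k$ are strictly increasing (Theorem~\ref{thm:Fr1}) and the $\xi^\dagger_k$ likewise (Theorem~\ref{thm:Fr1} applied to $s^\dagger$), disjointness turns the weak chain $\xi_k\le\xi^\dagger_k\le\xi_{k+1}$ into the strict chain $\xi_1<\xi^\dagger_1<\xi_2<\xi^\dagger_2<\cdots$. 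The only point requiring care is the det(S)/indet(S) dichotomy arising in the identification step; the determinant argument dispatches both cases uniformly, which is precisely why I would base the strictness on it rather than on a case split on the sign of $\alpha(s)$.
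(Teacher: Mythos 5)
Your proof is correct and follows essentially the same route as the paper: weak interlacing of the limits from the interlacing of the zeros of $P_n$ and $Q_{n+1}$, identification of $(\xi_k)$ and $(\xi^\dagger_k)$ as the zero sets of $B(z)\alpha(s)-D(z)$ and, via \eqref{eq:BDdag} and \eqref{eq:dgalpha}, of $A(z)\alpha(s)-C(z)$, and then strictness from disjointness of the two zero sets. The one place you go beyond the paper is in justifying that disjointness explicitly and uniformly with the determinant identity $AD-BC\equiv 1$, where the paper splits on the sign of $\alpha(s)$ and simply asserts $\xi_k\neq\xi^\dagger_k$ (leaving the justification to the general interlacing remark that follows); your argument is a clean way to fill in that detail.
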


\begin{rem}{\rm It is a general result about indeterminate Hamburger problems that for each $t\in\R$ the entire functions $A(z)t-C(z)$ and $B(z)t-D(z)$ have infinitely many zeros which are all real and simple and  they interlace.
}
\end{rem}

\noindent
Christian Berg\\
Department of Mathematical Sciences, University of Copenhagen\\
Universitetsparken 5, DK-2100 Copenhagen, Denmark\\
email: {\tt{berg@math.ku.dk}}

\end{document}